\documentclass[12pt]{article}
 \usepackage{amsmath}
\usepackage{amsfonts}
\usepackage{tikz}
\usepackage[title]{appendix}
\usepackage{subfig}
\usepackage{array}
\usepackage{blkarray}
\usepackage{tabu}
\usepackage{nth}
\usepackage[
  separate-uncertainty = true,
  multi-part-units = repeat
]{siunitx}

\usetikzlibrary{arrows}
\usepackage{epsfig,}
\usepackage{epsfig}
\tikzset{
    vertex/.style = {
        circle,
        draw,
        outer sep = 3pt,
        inner sep = 3pt,
    },edge/.style = {->,> = latex'}
}

\usepackage{amssymb}
\usepackage{enumitem}
\usepackage{amsthm}
\usepackage{dsfont}
\topmargin0cm
\oddsidemargin1cm
\evensidemargin1cm
\textheight 8.5in
\textwidth 6in
\def\diag{\mathop{\rm diag}}
\def\adj{\mathop{\rm adj}}

\def\inertia{\mathop{\rm In}}
\def\rank{\mathop{\rm rank}}
\newcommand{\rr}{\mathbb{R}}
\newcommand{\1}{\mathbf{1}}

\newcommand{\E}{\mathcal{E}}
\newcommand{\n}{\frac{n}{2}}

\def\det{{\rm det}}

\def\cir{{\rm Circ}}

\def\L{\widetilde{L}}
\def\D{\widetilde{D}}
\newtheorem{thm}{Theorem}

\newtheorem{lemma}{Lemma}
\newtheorem{definition}{Definition}

\bibliographystyle{elsarticle-num}
\begin{document}
\begin{center}
\begin{large}
An inverse formula for the distance matrix of a wheel graph with even number of vertices 
\end{large}
\end{center}
\begin{center}
R. Balaji, R.B. Bapat and Shivani Goel \\
\today
\end{center}

\begin{abstract}
Let $n \geq 4$ be an even integer and $W_n$ be the wheel graph with $n$ vertices. The distance $d_{ij}$ between any two distinct vertices $i$ and $j$ of $W_n$ is the length of the shortest path connecting $i$ and $j$. Let $D$ be the $n \times n$ symmetric matrix with diagonal entries equal to
zero and off-diagonal entries equal to $d_{ij}$. In this paper, we find a positive semidefinite matrix $\L$ such that $\rank(\L)=n-1$, all row sums of $\L$ equal to zero and a rank one matrix $ww^T$ such that   
\[D^{-1}=-\frac{1}{2}\L + \frac{4}{n-1}ww^T. \]
An interlacing property between the eigenvalues of $D$ and $\widetilde{L}$ is also proved.
\end{abstract}
{\bf Keywords.} Wheel graphs, Laplacian matrices, Distance matrices, Circulant matrices \\
{\bf AMS CLASSIFICATION.} 05C50
\section{Introduction}
Consider a simple connected graph $G=(V,\E)$ on $n$ vertices with vertex set $V:=\{1,\dotsc,n\}$ and edge set $\E$. Let $(i,j)$ denote an element in $\E$. If $\delta_i$ is the degree of vertex $i$, then define 
\begin{equation} \label{lap}
    l_{ij}:= \begin{cases}
  \delta_i & {i=j}\\  
    -1&(i,j)\in E\\
    0&\mbox{otherwise}.
    \end{cases}
\end{equation}
The matrix $L:=[l_{ij}]$ is known as the Laplacian of $G$ and is well-studied in the literature. Define $r_{ij}:=l_{ii}^{\dag}+l_{jj}^{\dagger}-2l_{ij}^{\dagger}$, where 
$l_{ij}^{\dag}$ is the $(i,j)$-th entry of the Moore-Penrose inverse of $L$.
If the graph $G$ is represented as an electrical circuit, then $r_{ab}$ is the effective resistance
between the two distinct nodes $a$ and $b$. By the properties of the Laplacian,
$r_{ij}: V \times V \to \rr$ is a metric.  For these reasons, the number $r_{ij}$ is called the resistance distance between the vertices $i$ and $j$ and the matrix
$R:=[r_{ij}]$ is called the resistance matrix of $G$. Among many results on resistance matrices, the formula to compute the inverse of $R$ is more significant. For $i=1,\dotsc,n$, let 
\[\tau_i:=2-\sum_{(i,j) \in E}r_{ij} \]  and $\tau$ be the $n \times 1$ vector with components $\tau_1,\dotsc,\tau_n$. Then,
\begin{equation} \label{resis}
R^{-1}=-\frac{1}{2}L+ \frac{1}{\tau'R\tau} \tau \tau'.
\end{equation} 
See Theorem 9.2 in \cite{bapat}.
 Resistance matrices are mathematically tractable. For example, it is easy to compute its inertia and
determinant. The definition of $r_{ij}$ ensures that the resistance matrices are Euclidean distance matrices. These matrices are known to have many interesting properties. See for e.g., \cite{alfakih}. 

In this paper, we consider the natural distance $d_{ij}$ which is the length of the shortest path connecting any two vertices $i$ and $j$ in $G$. We say that $D:= [d_{ij}]$ is the distance matrix of $G$. It is easy to see that $d_{ij}: V \times V \to \rr$ is a metric. In contrast to resistance matrices, distance matrices do not have many nice properties. 
 For example, distance matrices of connected graphs like cycles with even number of vertices are not invertible and are not Euclidean distance matrices. 
Unlike the resistance, there is no general identity that connects the natural distance and the Laplacian matrix of a graph.
 Nevertheless, distance matrices have wide applications. For example in chemistry, the classical distance $d_{ij}$ represents the structure of a molecule: see \cite{Klein}. Eigenvalues of distance matrices are applied to solve data communication problems in \cite{Graham}. 
Applications of distance matrices to biology are seen in \cite{jak}. 
 In view of these reasons, it is significant to study distance matrices.

If $D$ is the distance matrix of  a tree on $n$-vertices, then according to a classical result of Graham and Lov\'{a}sz,  
\begin{equation} \label{gl}
D^{-1}=-\frac{1}{2}L + \frac{1}{2(n-1)} \tau \tau', 
\end{equation}
where 
$L$ is the Laplacian matrix and $\tau:=(2-\delta_1,\dotsc,2-\delta_n)'$.
In this spirit, there are inverse formula for distance matrices of some connected graphs,
see for e.g., \cite{bapat_kirk, sivasu, hou}. 
We investigate distance matrices of wheel graphs in this paper. If $D$ is the distance matrix of a wheel graph on $n$ vertices, then
\[ \det(D) = \begin{cases}
1-n &~\mbox{if}~n~\mbox{is even}\\
0 &~\mbox{if}~n~\mbox{is odd}.
\end{cases}\]
See Theorem 7 in \cite{zhang}. By performing certain numerical experiments, we observed that for a wheel graph with even number of vertices, the inverse of the distance matrix has a simple formula as given in $(\ref{resis})$ and $(\ref{gl})$. The following is the main result of this paper. If $D$ is the distance matrix of a wheel graph with even number of vertices, then we find a positive semidefinite matrix $\L$ with  rank $n-1$ and all row sums equal to zero such that 
\[D^{-1} = -\frac{1}{2}\L+\frac{4}{n-1}ww',\]
where $w=\frac{1}{4}(5-n,1,\dotsc,1)'.$
In addition, we prove an interlacing property between the eigenvalues of $D$ and $\L$.
\section{Preliminaries}
We fix notation and mention a few results that are needed in the sequel.
\begin{enumerate}
\item[{\rm (P1)}] All vectors are assumed to be column vectors. If $A$ is a matrix, we use $A'$
to denote its transpose. The circulant matrix $C$ specified by a vector 
$c=(c_1,\dotsc,c_n)'$ in $\rr^{n}$ is the Toeplitz matrix 
\begin{equation*}
C := \left[
{\begin{array}{rrrrrr}
c_1 & c_2 & c_3 & \ldots & c_n \\
c_n & c_1 & c_2 & \ldots & c_{n-1} \\
c_{n-1} & c_n & c_1 &  \ldots &c_{n-2} \\
\vdots & \vdots & \vdots &  \ldots &\vdots \\
c_2 & c_3 & c_4 &  \ldots &c_1
\end{array}}
\right].
\end{equation*}
We now write $C=\cir(c')$.
Let $T:\rr^n \to \rr^n$ be the shift operator $T(v_1,\dotsc,v_n)'=(v_n,v_1,\dotsc,v_{n-1})'$. Then the $k$-th row of $\cir(c')$ 
is equal to $(T^{k-1}c)'$.

\item[({\rm P2)}] Let $n \geq 4$. The wheel graph with $n$-vertices is denoted by $W_n$. Let $C_{n-1}$ be the subgraph
of $W_n$ which is a cycle of length $n-1$. Without loss of generality, we assume that the vertices of $W_n$ are labelled as follows.
The vertices of $C_{n-1}$ are labelled $2,3,\dotsc,n$ anticlockwise and the hub of $W_n$ is labelled $1$. See for e.g., Figure \ref{fig_W6}.

\item[({\rm P3)}] We use the notation $\1$ to denote the vector of all ones and $I$ to denote the identity matrix. The number of components in $\1$ and the order of $I$
will be clear from the context.
\item[({\rm P4})] Let $D$ be the distance matrix of $W_n$. By labelling the vertices of $W_n$ as mentioned in (P2), $D$ has the form
\begin{equation*}\label{E_block_form_D}
    D = \left[ \begin{array}{cc}
        0 & \1' \\
        \1 & \widetilde{D}
    \end{array}\right],
\end{equation*}
where $\widetilde{D}=\cir(0,1,\underbrace{2,\dotsc,2,}_{(n-4)}1)$. We note that $\widetilde{D} \1=2(n-3) \1$.

\item[{\rm (P5)}] Let $C_1$ and $C_2$ be circulant matrices of the same order. If $C_1=\cir(c')$, then $C_1C_2=\cir(c'C_2)$. 

\item[{\rm (P6)}]  The following identity will be useful in the sequel. If $n$ is even, then
$$\sum_{k=1}^{\frac{n}{2}-1} (-1)^k {(n-1-2k)}=\dfrac{2-n}{2}.$$

\item[{\rm (P7)}] 
Let $x=(x_1,\dotsc,x_{n-1})' \in \rr^{n-1}$, where $n \geq 4$ is even. We say that $x$ follows symmetry in its last $n-2$ coordinates if 
\[x_i = x_{n+1-i}~~\mbox{for all}~~ i=2,3,\dotsc,n-1.\]
In other words, $x \in \rr^{n-1}$ follows symmetry in its last $n-2$ coordinates if and only if
\[x = (x_1,x_2,x_3,\dotsc,x_{\frac{n}{2}},x_{\frac{n}{2}},\dotsc,x_3,x_2)'.\]
\item[{\rm (P8)}]  Let $A$ be an $n \times n$ matrix. If $u$ and $v$ belong to $\rr^n$, then $\det(A+uv') = \det(A)+v'\adj(A)u$. This result is well known as matrix determinant lemma.
\item[{\rm (P9)}] An $n \times n$ non-negative symmetric matrix $A$ is called a Euclidean distance matrix, if all the diagonal entries are equal to zero and $x'Ax \leq 0$ for all $x \in \{\1\}^{\perp}.$
\end{enumerate}
\subsection{Special Laplacian for $W_n$}
We introduce a special Laplacian matrix for wheel graphs with even number of vertices.
This definition is motivated from numerical computations. 
\begin{definition}\label{D_laplacian} \rm
Let $n \geq 4$ be even. For each $k \in \{1,2,\dotsc,\frac{n}{2}-1\}$, define a vector ${c^k} := (c_{1}^{k},\dotsc,c_{n-1}^{k}) '$ in $\rr^{n-1}$ by
\begin{equation*}
    c^k_j := \begin{cases}
    1 & j=k+1~\mbox{or}~j=n-k \\ 
    0 &\mbox{otherwise}.
    \end{cases}
\end{equation*}
We now define
\begin{equation*}
    \L := \dfrac{(n-1)}{2}I - \dfrac{1}{2} \left[{\begin{array}{cc}
        0 & \1'  \\ 
        \1 & 0
    \end{array}}\right] + \sum_{k=1}^{\frac{n}{2}-1} (-1)^k \dfrac{(n-1)-2k}{2} \left[{\begin{array}{cc}
        0 & 0  \\
        0 & C_k
    \end{array}}\right],
\end{equation*}
where
$C_k:=\cir({c^k}^{'})$. We say that $\L$ is the special Laplacian of $W_n$.
\end{definition}

\section{Results}

\subsection{Inverse formula}
The following theorem is our main result.
\begin{thm} \label{inverse}
Let $n \geq 4$ be an even integer and $D$ be the distance matrix of $W_n$. Define $w \in \rr^n$ by
$w:=\frac{1}{4}(5-n,1,\dotsc,1)'$. Then,
\[D^{-1} = -\frac{1}{2} \L+\dfrac{4}{n-1}ww',\] where $\L$ is the special Laplacian of $W_n$.
\end{thm}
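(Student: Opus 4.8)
The plan is to verify the formula directly by checking that $DM = I$, where $M := -\tfrac12 \L + \tfrac{4}{n-1} w w'$; since $D$ and $M$ are both symmetric, this is equivalent to $M = D^{-1}$. The first move is to dispose of the rank-one term. Writing $w = \tfrac14\bigl[\begin{smallmatrix} 5-n \\ \1 \end{smallmatrix}\bigr]$ and using the block form of $D$ from (P4) together with $\D\1 = 2(n-3)\1$, a short computation gives
\[
Dw = \tfrac14 \begin{bmatrix} \1'\1 \\ (5-n)\1 + \D\1 \end{bmatrix} = \frac{n-1}{4}\,\1 ,
\]
so that $\tfrac{4}{n-1}\,Dww' = \1 w'$. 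Hence $DM = I$ is equivalent to the single matrix identity
\[
D\L = 2\,\1 w' - 2I .
\]

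Next I would split both $D$ and $\L$ into the $1+(n-1)$ block form suggested by (P4) and Definition 1. The bottom-right block of $\L$ is $\widetilde{S} := \tfrac{n-1}{2} I + S$, where $S := \sum_{k=1}^{\n-1}(-1)^k \tfrac{(n-1)-2k}{2} C_k$ is circulant. Reading off the symbol (first row) of $S$ shows that its row sum equals $\sum_{k=1}^{\n-1}(-1)^k(n-1-2k)$, which by (P6) equals $\tfrac{2-n}{2}$; therefore $\widetilde{S}\1 = \tfrac12\1$ and, since $\widetilde{S}$ is circulant, also $\1'\widetilde{S} = \tfrac12\1'$. Carrying out the block multiplication $D\L$ and comparing with the block form of $2\,\1 w' - 2I$, three of the four blocks match immediately using only $\D\1 = 2(n-3)\1$, $\widetilde{S}\1 = \tfrac12\1$, $\1'\widetilde{S}=\tfrac12\1'$, and $\1'\1 = n-1$. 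The whole theorem therefore reduces to the single circulant identity
\[
\D\,\widetilde{S} = \1\1' - 2I .
\]

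To prove this last identity, I would use that $\D$ and $\widetilde{S}$ are both circulant of order $n-1$, so by (P5) their product is circulant and it suffices to check its first row. Writing $d = (0,1,2,\dotsc,2,1)'$ for the symbol of $\D$, the first row of the product is $d'\widetilde{S}$ (via $\D = \cir(d')$ and (P5)), and I would show $d'\widetilde{S} = (-1,1,\dotsc,1)$, which is precisely the first row of $\1\1' - 2I$. This amounts to a circular convolution of $d$ against the alternating symbol of $\widetilde{S}$; the symmetry of that symbol guaranteed by (P7) lets me restrict attention to the entries indexed $1,\dotsc,\n$, and the alternating sums that arise collapse by repeated use of (P6).

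The main obstacle is precisely this last convolution bookkeeping: tracking how the block of $2$'s in $d$ interacts with the sign-changing weights $(-1)^k\tfrac{(n-1)-2k}{2}$ sitting inside $\widetilde{S}$, and verifying that the resulting telescoping and alternating sums evaluate to the constant $1$ off the diagonal and to $-1$ on the diagonal. Everything upstream is routine block algebra; (P6) is the arithmetic engine that forces the off-diagonal entries to be constant, and (P7) keeps the number of distinct cases manageable.
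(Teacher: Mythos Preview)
Your proposal is correct and follows essentially the same route as the paper: the paper computes $\L D$ (you compute its transpose $D\L$), obtains the identity $\L D + 2I = 2w\1'$ together with $Dw = \tfrac{n-1}{4}\1$, and then solves for $D^{-1}$ exactly as you do. Your remaining circulant identity $\D\widetilde{S} = \1\1' - 2I$ is the transpose of the paper's computation of the bottom-right block $M$; the paper carries out the convolution you describe by first evaluating each ${c^k}'\D$ explicitly (its Lemmas~2--4) and then summing the weighted rows coordinate by coordinate (Lemmas~5--9), which is precisely the ``main obstacle'' you flagged.
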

We now illustrate Theorem \ref{inverse} for $W_6$.
\subsection{Illustration for $W_6$} \label{W6_example}
We label the six vertices of $W_6$ as mentioned in (P2): see Figure $\ref{fig_W6}$.
\begin{figure}
\centering
\begin{tikzpicture}[shorten >=1pt, auto, node distance=3cm, ultra thick,
   node_style/.style={circle,draw=black,fill=white !20!,font=\sffamily\Large\bfseries},
   edge_style/.style={draw=black, ultra thick}]
\node[vertex] (1) at  (0,0) {$1$};
\node[vertex] (2) at  (2,-2) {$2$};
\node[vertex] (3) at  (2,0) {$3$}; 
\node[vertex] (4) at  (0,2) {$4$};  
\node[vertex] (5) at  (-2,0) {$5$};  
\node[vertex] (6) at  (-2,-2) {$6$};  
\draw  (1) to (2);
\draw  (1) to (3);
\draw  (1) to (4);
\draw  (1) to (5);
\draw  (1) to (6);
\draw  (2) to (3);
\draw  (3) to (4);
\draw  (4) to (5);
\draw  (5) to (6);
\draw  (6) to (2);
\end{tikzpicture}
\caption{Wheel graph $W_6$} \label{fig_W6}
\end{figure}
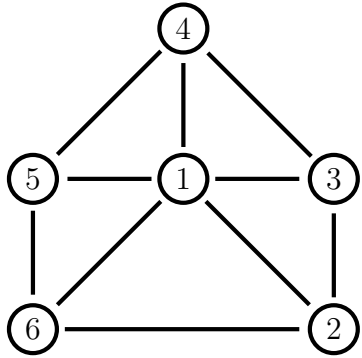
The special Laplacian $\L$ is now given by
\begin{equation*}
      \L=\frac{1}{2}\left[
{\begin{array}{rrrrrr}
5& -1&  -1 & -1& -1 & -1 \\
-1 & 5 & -3 & 1& 1 & -3 \\
-1 & -3 & 5 & -3 & 1 & 1 \\
-1 & 1 & -3 & 5 & -3 & 1 \\
-1 & 1 & 1 & -3 & 5 & -3 \\
-1 & -3& 1 & 1 & -3 & 5
\end{array}}
\right].
\end{equation*}
The distance matrix $D$ of $W_6$ is given by
\begin{equation*}
      D=\left[
{\begin{array}{rrrrrr}
0& 1&  1 & 1& 1 & 1 \\
1 & 0 & 1 & 2& 2 & 1 \\
1 & 1 & 0 & 1 & 2 & 2 \\
1 & 2 & 1 & 0 & 1 & 2 \\
1 & 2 & 2 & 1 & 0 & 1 \\
1 & 1& 2& 2 & 1 & 0
\end{array}}
\right].
\end{equation*}
Let $w=\frac{1}{4}(-1,1,\dotsc,1)'$.
By an easy computation,
\[ -\frac{1}{2}\L + \frac{4}{5} ww'=1/5 \left[{\begin{array}{rrrrrr}
-6 & 1 & 1 & 1 & 1 & 1 \\

1 & -6 &4 & -1& -1& 4 \\

1 & 4 & -6 & 4 & -1 & -1 \\

1 & -1 & 4 & -6 & 4 & -1 \\

1 & -1 & -1 & 4 & -6 & 4 \\

1 & 4 & -1 & -1 & 4 & -6 \\
\end{array}}
\right], \]
which is the inverse of $D$.

\subsection{Proof of Inverse formula}
For $W_4$, the result can be verified directly and for $W_{6}$, the result is true from the above illustration.
In the rest of the paper, we assume $n \geq 8$. We need some elementary lemmas to prove the main result.
Recall from (P4) that the first column of $\D$ is equal to $(0,1,2\dotsc,2,1)'$. We fix the notation $u$ to denote this vector. We shall use the identity in (P6) repeatedly.
\begin{lemma} \label{M}
\[\L D=
\left[\begin{array}{cccc}
\dfrac{1-n}{2} & \dfrac{5-n}{2}\1\\
\\
\dfrac{1}{2}\1' & M
\end{array}\right],\]
where
\[M:=\cir \big(\dfrac{n-1}{2}u'-\frac{1}{2}\1'+\sum_{k=1}^{\frac{n}{2}-1} (-1)^{k} \frac{(n-1)-2k}{2}{c^k}^{'} \D\big).\]
\end{lemma}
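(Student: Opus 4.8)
The plan is to exploit the $2\times 2$ block structure that both matrices inherit from the labelling in (P2), treating the hub vertex $1$ separately from the rim vertices $2,\dots,n$. From Definition \ref{D_laplacian}, the special Laplacian splits as
\[
\L = \begin{bmatrix} \dfrac{n-1}{2} & -\dfrac{1}{2}\1' \\[4pt] -\dfrac{1}{2}\1 & A \end{bmatrix},
\qquad
A := \frac{n-1}{2}I + \sum_{k=1}^{\frac{n}{2}-1}(-1)^k\frac{(n-1)-2k}{2}\,C_k,
\]
where $A$ is $(n-1)\times(n-1)$ and, as a sum of circulants, is itself circulant. Writing $D=\left[\begin{smallmatrix} 0 & \1' \\ \1 & \D \end{smallmatrix}\right]$ as in (P4), I would compute $\L D$ one block at a time and match each block against the claimed right-hand side.

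The two blocks in the first row are immediate. For the $(1,1)$ entry, $\tfrac{n-1}{2}\cdot 0 + (-\tfrac{1}{2}\1')\1 = -\tfrac{1}{2}(n-1)=\tfrac{1-n}{2}$, using $\1'\1 = n-1$. For the $(1,2)$ block I would use that $\D$ is symmetric, so by (P4) $\1'\D = (\D\1)' = 2(n-3)\1'$; then $\tfrac{n-1}{2}\1' - \tfrac{1}{2}\,\1'\D = \big(\tfrac{n-1}{2}-(n-3)\big)\1' = \tfrac{5-n}{2}\1'$, as required.

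The first genuinely nontrivial block is $(2,1)$, equal to $A\1$, and this is where I expect the main work and where the identity (P6) enters. Each $C_k=\cir({c^k}')$ has generating vector $c^k$ with ones exactly in positions $k+1$ and $n-k$; because $n$ is even these indices never coincide (they would agree only if $k=\tfrac{n-1}{2}$, which is not an integer), so every row of $C_k$ contains exactly two ones and $C_k\1 = 2\1$. Hence
\[
A\1 = \frac{n-1}{2}\1 + \Big(\sum_{k=1}^{\frac{n}{2}-1}(-1)^k\big((n-1)-2k\big)\Big)\1
= \frac{n-1}{2}\1 + \frac{2-n}{2}\1 = \frac{1}{2}\1,
\]
the middle equality being precisely (P6); this is the claimed $(2,1)$ block.

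Finally the $(2,2)$ block is $-\tfrac{1}{2}\1\1' + A\D$. Both $A$ and $\D=\cir(u')$ are circulant, so by (P5) the product $A\D$ is circulant; applying (P5) term by term gives $\tfrac{n-1}{2}\D = \cir\big(\tfrac{n-1}{2}u'\big)$ and $C_k\D = \cir({c^k}'\D)$. Since the all-ones matrix $\1\1'=\cir(\1')$ is also circulant, I can write $-\tfrac{1}{2}\1\1' = \cir\big(-\tfrac{1}{2}\1'\big)$ and collect the four contributions into a single circulant,
\[
-\frac{1}{2}\1\1' + A\D
= \cir\Big(\frac{n-1}{2}u' - \frac{1}{2}\1' + \sum_{k=1}^{\frac{n}{2}-1}(-1)^k\frac{(n-1)-2k}{2}\,{c^k}'\D\Big) = M.
\]
The only real obstacle is bookkeeping: verifying $C_k\1 = 2\1$ together with the correct invocation of (P6) in the $(2,1)$ block, and applying (P5) consistently so that the circulant pieces of the $(2,2)$ block amalgamate into the single generating row that defines $M$. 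No idea deeper than block multiplication and circulant algebra is required.
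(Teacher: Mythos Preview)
Your argument is correct and follows essentially the same route as the paper: block-multiply $\L D$, use $\D\1=2(n-3)\1$ for the $(1,2)$ block, use $C_k\1=2\1$ together with the identity (P6) for the $(2,1)$ block, and assemble the $(2,2)$ block as a single circulant via (P5). The only cosmetic differences are that you name the lower-right block of $\L$ and work through it, and that you explicitly justify $k+1\neq n-k$ (which the paper leaves implicit).
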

\begin{proof}
Multiplying $\L$ and $D$, we get 
\[\L D=
\left[\begin{array}{cccc}
\frac{1-n}{2} & A\\
B & M
\end{array}\right],\]
where 
\[A = \frac{n-1}{2}\1'-\frac{1}{2} \1'\D ,\]
\[B =\dfrac{n-1}{2} \1+\sum_{k=1}^{\frac{n}{2}-1} (-1)^k \dfrac{(n-1)-2k}{2}C_k \1,\]
and
\[M= \frac{n-1}{2} \D-\dfrac{1}{2} \1 \1'+\sum_{k=1}^{\frac{n}{2}-1} (-1)^k \dfrac{(n-1)-2k}{2}C_k\D.\] 
Since $\D=\cir (u' )$, $\1 \1'=\cir(\1')$ and $C_k\D=\cir({c^k}^{'} \D)$, we get
\[M=\cir \big(\dfrac{n-1}{2}u'-\frac{1}{2}\1'+\sum_{k=1}^{\frac{n}{2}-1} (-1)^{k} \frac{(n-1)-2k}{2}{c^k}^{'} \D\big).\] 
As
     $\D \1 = {2}{(n-3)} \1$, we have
\begin{equation*}\label{E_A}
\begin{aligned}
A &= \dfrac{n-1}{2}\1'-\dfrac{1}{2}\1'\widetilde{D} \\ 
 &= \dfrac{n-1}{2} \1'-(n-3) \1'
\\ &=  \dfrac{5-n}{2}\1'.
\end{aligned}
\end{equation*} 
 Each $C_k$ is a circulant matrix specified by a vector in $\rr^{n-1}$ with exactly two ones and remaining entries equal to zero. Therefore, 
 \[C_k \1 = 2 \1~~~\mbox{for all}~ k=1,2,\dotsc,\n-1.\]
 Thus,
\begin{equation*}
\begin{aligned}
B=\dfrac{n-1}{2} \1+\sum_{k=1}^{\frac{n}{2}-1} (-1)^k (n-1-2k) \1.
\end{aligned}
\end{equation*} 
By the identity in (P6), we now get 
\begin{equation*}\label{E_B}
B =  \dfrac{n-1}{2}\1-\dfrac{n-2}{2} \1=  \dfrac{1}{2} \1.
\end{equation*}
This proves the lemma.
\end{proof}

To simplify $M$, we compute the row vectors ${c^k}^{'} \D$ for each $k=1,\dotsc,\n-1$ precisely. This is done in the next three lemmas.
\begin{lemma} \label{first_row}
${c^{1}}'\widetilde{D}=({2,2},3,\underbrace{4,\dotsc,4}_{n-6},3,2)$
and ${c^{1}}'\D$ follows symmetry in its last $n-2$ 
co-ordinates.
\end{lemma}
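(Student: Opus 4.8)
The plan is to compute ${c^1}'\D$ directly as a sum of two rows of $\D$ and then read off both assertions from the result. Since $c^1$ has its two $1$'s in positions $2$ and $n-1$ and is zero elsewhere, the row vector ${c^1}'\D$ equals the sum of the $2$nd and $(n-1)$th rows of $\D = \cir(u')$, where $u=(0,1,2,\dots,2,1)'$. By (P1) the $k$th row of $\cir(u')$ is $(T^{k-1}u)'$ with $T$ the cyclic shift, so the first step is to write down $(Tu)'$ and $(T^{n-2}u)'=(T^{-1}u)'$ explicitly.

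First I would record these two rows. Applying $T$ once moves the last entry of $u$ to the front, giving the $2$nd row $(1,0,1,2,\dots,2)$ (the leading $1$ being $u_{n-1}$, followed by $u_1,\dots,u_{n-2}$); applying $T^{-1}$ moves the first entry to the back, giving the $(n-1)$th row $(1,2,\dots,2,1,0)$. Adding entrywise and tracking the endpoints produces $(2,2,3,4,\dots,4,3,2)$, with the $1$'s and $0$'s of the two rows combining so that positions $1,2,3$ read $2,2,3$, positions $n-2,n-1$ read $3,2$, and positions $4$ through $n-3$ all equal $4$, i.e. exactly $n-6$ fours. This is where the only genuine care is needed: one must confirm that for $n\ge 8$ the boundary positions $1,2,3$ and $n-2,n-1$ do not overlap the interior block of $4$'s, so that the stated pattern is uniform — this is precisely why the argument is restricted to $n\ge 8$.

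For the symmetry claim I would give a structural argument rather than rely on inspection. Let $S$ be the permutation fixing coordinate $1$ and reversing coordinates $2,\dots,n-1$, so that $x$ follows symmetry in the sense of (P7) exactly when $Sx=x$. Identifying positions $1,\dots,n-1$ with the residues $0,\dots,n-2$ of $\mathbb{Z}/(n-1)$, the map $S$ is the reflection $a\mapsto -a$, and the key point is that this reflection commutes with every symmetric circulant of order $n-1$; since $\D$ is a symmetric circulant, $S\D=\D S$. Because $c^1$ has its nonzero entries in the two positions $2$ and $n-1$ that $S$ interchanges, we have $Sc^1=c^1$, whence $S(\D c^1)=\D(Sc^1)=\D c^1$. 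As $\D$ is symmetric this says $({c^1}'\D)'=\D c^1$ is fixed by $S$, i.e. ${c^1}'\D$ follows symmetry in its last $n-2$ coordinates. (Equivalently, one checks $x_i=x_{n+1-i}$ against the explicit row just computed.) I expect no real obstacle beyond the index bookkeeping of the first step; the symmetry is forced by the dihedral symmetry of the construction.
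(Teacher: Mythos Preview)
Your proposal is correct and follows essentially the same approach as the paper: both identify ${c^1}'\D$ as the sum of the second and last rows of $\D$, write these rows explicitly as $(1,0,1,2,\dots,2)$ and $(1,2,\dots,2,1,0)$, and add. The only difference is that the paper dispatches the symmetry claim with a single ``direct verification'' sentence, whereas you supply the conceptual reason (the reflection $S$ commutes with symmetric circulants and fixes $c^1$); your argument is a pleasant addition but not a different route.
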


\begin{proof}
By definition $\ref{D_laplacian}$, ${c^1}'=(0,1,0,\dotsc,0,1)'$.  So, ${c^{1}}'\widetilde{D}$ is the sum of the second row and the last
row of $\widetilde{D}$. Let $x$ be the second row and $y$ be the last row of $\widetilde{D}$. Then,
\[x=(1,0,1,2,\dotsc,2)~~\mbox{and}~~y=(1,2,\dotsc,2,1,0).\]
Now, $x+y=({2,2},3,\underbrace{4,\dotsc,4}_{n-6},3,2)$. 
To verify ${c^{1}}'\D$ follows symmetry in its last $n-2$ co-ordinates is direct. This completes the proof.
\end{proof}

\begin{lemma} \label{n/2_row}
Let $p:=c^{\frac{n}{2}-1}$. Then,
\[p'\widetilde{D}=(\underbrace{4,\dotsc,4}_{\frac{n}{2}-2},3,1,1,3,\underbrace{4,\dotsc,4}_{\frac{n}{2}-3}), \]
and $p'\D$ follows symmetry in its last $n-2$ coordinates.
\end{lemma}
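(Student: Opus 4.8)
The plan is to mirror the proof of Lemma \ref{first_row}: express $p'\D$ as a sum of two specific rows of $\D$ and then add them entrywise. First I would unwind the definition of $p = c^{\frac{n}{2}-1}$. By Definition \ref{D_laplacian}, the entry $c^k_j$ equals $1$ precisely when $j = k+1$ or $j = n-k$. Setting $k = \frac{n}{2}-1$ gives $k+1 = \frac{n}{2}$ and $n-k = \frac{n}{2}+1$, so $p$ is the vector in $\rr^{n-1}$ having ones in coordinates $\frac{n}{2}$ and $\frac{n}{2}+1$ and zeros elsewhere. Consequently $p'\D$ is exactly the sum of the $\frac{n}{2}$-th and $(\frac{n}{2}+1)$-th rows of $\D$.

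Next I would write down these two rows explicitly. Since $\D = \cir(u')$ with $u = (0,1,2,\dotsc,2,1)'$, the $m$-th row of $\D$ equals the cyclic shift $(T^{m-1}u)'$ by (P1); concretely it carries a $0$ in column $m$, a $1$ in columns $m-1$ and $m+1$, and a $2$ in every other column. Because $n \geq 8$ we have $4 \leq \frac{n}{2}$ and $\frac{n}{2}+2 \leq n-1$, so for the two middle rows $m=\frac{n}{2}$ and $m=\frac{n}{2}+1$ none of the distinguished columns $\frac{n}{2}-1,\frac{n}{2},\frac{n}{2}+1,\frac{n}{2}+2$ wrap around and they are mutually distinct. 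Thus row $\frac{n}{2}$ reads $2$ everywhere except for the entries $1,0,1$ in columns $\frac{n}{2}-1,\frac{n}{2},\frac{n}{2}+1$, and row $\frac{n}{2}+1$ reads $2$ everywhere except for $1,0,1$ in columns $\frac{n}{2},\frac{n}{2}+1,\frac{n}{2}+2$.

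Then I would add the two rows column by column. Away from the four central columns both entries equal $2$, contributing $4$; in columns $\frac{n}{2}-1$ and $\frac{n}{2}+2$ the entries are $1+2=3$; and in columns $\frac{n}{2}$ and $\frac{n}{2}+1$ they are $1+0=1$. Counting the runs of fours then yields $\frac{n}{2}-2$ leading fours in columns $1$ through $\frac{n}{2}-2$ and $\frac{n}{2}-3$ trailing fours in columns $\frac{n}{2}+3$ through $n-1$, for a total of $(\frac{n}{2}-2)+4+(\frac{n}{2}-3)=n-1$ entries, which is precisely the asserted vector $(\underbrace{4,\dotsc,4}_{\frac{n}{2}-2},3,1,1,3,\underbrace{4,\dotsc,4}_{\frac{n}{2}-3})$.

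Finally, the symmetry in the last $n-2$ coordinates is immediate from this explicit form: the central block $3,1,1,3$ is palindromic and the two flanking runs of fours are interchanged under the map $i \mapsto n+1-i$, so $x_i = x_{n+1-i}$ for $i=2,\dotsc,n-1$ as required by (P7). The only genuine work is the index bookkeeping---correctly counting the two runs of fours and confirming that no distinguished column wraps around---which is routine once $n \geq 8$ is in force. I do not expect any substantive obstacle beyond this careful indexing, exactly as in the proof of Lemma \ref{first_row}.
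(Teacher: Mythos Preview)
Your proposal is correct and follows essentially the same approach as the paper: both identify $p'\D$ as the sum of rows $\frac{n}{2}$ and $\frac{n}{2}+1$ of $\D$ via the cyclic-shift description $(T^{m-1}u)'$, add these entrywise, and then read off the symmetry from the explicit form. The only cosmetic difference is that the paper verifies the symmetry by tabulating the entries of $s:=p'\D$ case by case, whereas you argue via the palindromic pattern $3,1,1,3$ flanked by fours; both are equally valid.
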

\begin{proof}
By definition \ref{D_laplacian},
\[p'=(\underbrace{0,\dotsc,0}_{\frac{n}{2}-1},1,1,\underbrace{0,\dotsc,0}_{\frac{n}{2}-2}).\]
Hence, $p'\D$ is the sum of $\n$-th and $(\n +1)$-th rows of $\widetilde{D}$. Let these two rows be $x$ and $y$,
respectively.  Because $\widetilde{D}$ is circulant, $x'=T^{\frac{n}{2}-1}u$ and $y'=T^{\frac{n}{2}}u$, where $u=(0,1,2,\dotsc,2,1)'$ and $T$ is the shift operator defined in (P1). Thus,
\[x=(\underbrace{2,\dotsc,2}_{\frac{n}{2}-2},1,0,1,\underbrace{2,\dotsc,2}_{\frac{n}{2}-2}),\]
\[y=(\underbrace{2,\dotsc,2}_{\frac{n}{2}-1},1,0,1,\underbrace{2,\dotsc,2}_{\frac{n}{2}-3}). \]
Therefore, $$p'\D=x+y=(\underbrace{4,\dotsc,4}_{\frac{n}{2}-2},3,1,1,3,\underbrace{4,\dotsc,4}_{\frac{n}{2}-3}).$$ 
Define $s:=p' \D$. Then, \[s_i = \begin{cases}
1& \mbox{for}~i={\n},{\n+1}\\
3&\mbox{for}~i={\n-1},{\n+2}\\
4&\mbox{otherwise}.
\end{cases}\]
The above equation shows that $s$ follows symmetry in 
its last $n-2$ coordinates.
The proof is complete.
\end{proof}
\begin{lemma}\label{1<k}
Let $1<k<\n-1$. Define $q^k:={c^k}'\widetilde{D}$.  If $q^k=(q_{1}^k,\dotsc,q_{n-1}^k)$, then
\[q_j^k=
    \begin{cases}
    2 & \mbox{if~}j=k+1,n-k \\
    3 & \mbox{if~}j=k,k+2,n-k-1,n-k+1\\
    4 & \mbox{otherwise.}
    \end{cases}\]
Furthermore, each $q^k$ follows symmetry in its last $n-2$ coordinates.
\end{lemma}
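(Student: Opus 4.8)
The plan is to reproduce, in uniform fashion, the row computations already carried out for the boundary cases $k=1$ and $k=\n-1$ in Lemmas \ref{first_row} and \ref{n/2_row}. Since the vector $c^k$ has exactly two nonzero entries, both equal to $1$, located in positions $k+1$ and $n-k$, the product $q^k={c^k}'\widetilde{D}$ is just the sum of row $k+1$ and row $n-k$ of $\widetilde{D}$. So the first step is to write these two rows down explicitly.

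Because $\widetilde{D}=\cir(u')$ with $u=(0,1,2,\dotsc,2,1)'$, every row $m$ of $\widetilde{D}$ has a single $0$ in position $m$, a $1$ in each of the two cyclically adjacent positions $m-1$ and $m+1$, and a $2$ in every other position; this is precisely the structure exhibited for $m=2,\,\n,\,\n+1,\,n-1$ in the previous two lemmas. Applying this with $m=k+1$ gives special positions $k,k+1,k+2$ carrying values $1,0,1$, and applying it with $m=n-k$ gives special positions $n-k-1,n-k,n-k+1$ carrying values $1,0,1$, all remaining entries of each row being $2$.

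The key step, and the only place where the hypothesis $1<k<\n-1$ is genuinely used, is to check that these two clusters of special positions are disjoint, so that the rows may be added coordinatewise without interaction. Under $2\leq k\leq\n-2$ the three special positions $k,k+1,k+2$ of row $k+1$ all lie in $\{2,\dotsc,\n\}$, while the three special positions $n-k-1,n-k,n-k+1$ of row $n-k$ all lie in $\{\n+1,\dotsc,n-1\}$; hence the two clusters are disjoint and neither meets position $1$. (The boundary values $k=1$ and $k=\n-1$, where the clusters touch, are exactly the cases handled separately in Lemmas \ref{first_row} and \ref{n/2_row}.) Once disjointness is in hand, positions $k+1$ and $n-k$ each receive $0+2=2$, the four positions $k,k+2,n-k-1,n-k+1$ each receive $1+2=3$, and every remaining position receives $2+2=4$, which is exactly the asserted formula for $q^k_j$.

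Finally, the symmetry claim will follow by inspecting the special positions under the involution $j\mapsto n+1-j$ on $\{2,\dotsc,n-1\}$: one has $n+1-(k+1)=n-k$, so the two value-$2$ positions are interchanged, while $n+1-k=n-k+1$ and $n+1-(k+2)=n-k-1$, so the four value-$3$ positions are interchanged in pairs. Since position $1$ is fixed by the symmetry requirement and carries the value $4$ (as $k\geq2$ keeps it out of both clusters), the vector $q^k$ satisfies $q^k_i=q^k_{n+1-i}$ for $i=2,\dotsc,n-1$, which is the symmetry in the last $n-2$ coordinates. I expect the only real care to be in pinning down the index ranges in the disjointness step; beyond that bookkeeping, no genuine obstacle arises.
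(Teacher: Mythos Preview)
Your proposal is correct and follows essentially the same route as the paper: both recognize $q^k$ as the sum of rows $k+1$ and $n-k$ of $\widetilde{D}$, identify the ``$1,0,1$'' patterns in each row, use $1<k<\tfrac{n}{2}-1$ to separate the two patterns (the paper via the inequality $n-k-2\geq k+2$, you via the cleaner observation that the clusters lie in $\{2,\dotsc,\tfrac{n}{2}\}$ and $\{\tfrac{n}{2}+1,\dotsc,n-1\}$ respectively), and then read off the sum and the symmetry directly. Your phrasing in terms of disjoint clusters is a bit tidier than the paper's piecewise index bookkeeping, but the argument is the same.
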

\begin{proof}
Let $1<k<\n-1$ be fixed. Then, 
\begin{equation*}
    c^k_j = \begin{cases}
    1 & j=k+1~\mbox{or}~j=n-k \\ 
    0 &\mbox{otherwise}.
    \end{cases}
\end{equation*}
Hence, $q^k$ is the sum of $(k+1)$-th and $(n-k)$-th rows of $\widetilde{D}$. Let these rows be $x$ and $y$, respectively.
Now,  $x'=T^{k}u$ and $y'=T^{n-k-1}u$.
This gives 
\[x=(\underbrace{2,\dotsc,2}_{k-1},1,0,1,\underbrace{2,\dotsc,2}_{n-k-3})'. \]
\[y=(\underbrace{2,\dotsc,2}_{n-k-2},1,0,1,\underbrace{2,\dotsc,2}_{k-2})'. \]
We now compute $x+y$.
Since $n$ is even and $1<k<\n-1$, we have 
\[n-k-2 \geq k+2.\]
This immediately gives 
\begin{equation} \label{s1}
(x+y)_{j}=
    \begin{cases}
    4 & \mbox{if~}j=1,\dotsc,k-1 \\
    3 & \mbox{if~}j=k,k+2\\
    2 & \mbox{if~} j=k+1.
    \end{cases}
    \end{equation}
 Let $k+2<j\leq n-k-2$. Then, $x_{j}=y_{j}=2$. Hence in this case 
 \begin{equation}\label{snew}
(x+y)_{j}=4.     
 \end{equation}
We note that  \[y_j=
    \begin{cases}
    1 & \mbox{if~}j=n-k-1,n-k+1 \\
    0 & \mbox{if~}j=n-k.
    \end{cases}\]
Since $x_{j}=2$ for all $j>k+2$ and $n-k-1>k+2 $, we have $x_{j}=2$ for all $j \geq n-k-1$. Now, it follows that 
\begin{equation} \label{s2}
(x+y)_j=
    \begin{cases}
    3 & \mbox{if~}j=n-k-1,n-k+1 \\
    2 & \mbox{if~}j=n-k.
    \end{cases}
    \end{equation}
Finally, for all $j >n-k+1$, $x_{j}=2$ and $y_{j}=2$. Thus,
\begin{equation} \label{s3}
(x+y)_{j}=4~~\mbox{if~} n-k+1<j \leq n-1.
\end{equation}
By $(\ref{s1})$, $(\ref{snew})$, $(\ref{s2})$ and $(\ref{s3})$, we get \[q_j^k=
    \begin{cases}
    2 & \mbox{if~}j=k+1,n-k \\
    3 & \mbox{if~}j=k,k+2,n-k-1,n-k+1\\
    4 & \mbox{otherwise.}
    \end{cases}\]
To show that the vector $q^k$ follows symmetry in its last $n-2$ co-ordinates, we verify the equations:
 \[ q^k_{i}=q^k_{n+1-i}~~ \forall i=2,\dotsc,n-1.\] We consider three possible cases.
\begin{enumerate}
    \item[(i)] $q^k_i=2$. This means $i \in \{k+1,n-k\}$. We note that $i=k+1$ if and only if $n+1-i=n-k$ . Since $q^k_{k+1}=q^k_{n-k}=2$, we have $q^k_i=q^k_{n+1-i}$ in this case.
    \item[(ii)] $q^k_i=3$.  This implies $$i \in \{k,k+2,n-k-1,n-k+1\}.$$ It is easy to see that $i=k$ if and only if $n+1-i=n-k+1$ and 
$i=k+2$ if and only $n+1-i=n-k-1$. Therefore, $q^k_{i}=q^k_{n+1-i}$.
\item[(iii)] $q^k_i=4$. 
This implies $i \notin \{k+1,n-k\}$ and hence, $n+1-i \notin \{k+1,n-k\}$. Therefore
$q^k_{n+1-i} \neq 2$.
By a similar argument,  $q^k_{n+1-i} \neq 3$. Hence, $q^k_{n+1-i}=4$.
\end{enumerate}
The proof is complete.
\end{proof}
To simplify $M$, we compute
$$f:=\sum_{k=1}^{\n-1} (-1)^{k} \frac{(n-1-2k)}{2}{c^k}'\D.$$
For $1\leq k \leq \n-1$, define $q^k:={c^k}'\D$. We shall write $q^k=(q_{1}^k,\dotsc,q_{n-1}^k)$ and $f:=(f_{1},\dotsc,f_{n-1})$. Now $f$ is the row vector
\[f=\sum_{k=1}^{\n-1} (-1)^{k} \frac{(n-1-2k)}{2}(q_{1}^k,\dotsc,q_{n-1}^{k}).\]
We now compute $f$ precisely.

\begin{lemma} \label{f1}
$f_{1}=-1.$
\end{lemma}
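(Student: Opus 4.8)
The plan is to read off $f_1$ directly as the first coordinate of the weighted sum defining $f$, which reduces the whole problem to identifying the single number $q^k_1 = ({c^k}'\D)_1$ for each $k$. First I would note that
\[f_1 = \sum_{k=1}^{\n-1} (-1)^k \frac{n-1-2k}{2}\, q^k_1,\]
so that everything depends only on the leading entries of the row vectors already computed in Lemmas \ref{first_row}, \ref{n/2_row} and \ref{1<k}. The strategy is to split the index range into the single endpoint $k=1$, the generic middle range $1<k<\n-1$, and the endpoint $k=\n-1$, and to show that $q^k_1$ takes the value $2$ only at $k=1$ and the value $4$ everywhere else.

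The main (and only mildly delicate) step is pinning down these values, because $k=1$ behaves differently from the generic formula. Lemma \ref{first_row} gives the first coordinate of ${c^1}'\D$ as $2$, so $q^1_1=2$. For $1<k<\n-1$, Lemma \ref{1<k} shows $q^k_j\neq 4$ only when $j$ lies in $\{k,k+1,k+2,n-k-1,n-k,n-k+1\}$; each of these equalling $1$ would force either $k\le 1$ or $k\ge n-2$, both impossible for $2\le k\le \n-2$, so $q^k_1=4$ throughout this range. Finally, for $k=\n-1$, Lemma \ref{n/2_row} exhibits a leading block of $4$'s of length $\n-2\ge 1$ (using $n\ge 8$), hence $q^{\n-1}_1=4$ as well. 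This case analysis is where I would be most careful, precisely because the $k=1$ endpoint is the exception.

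Once the values $q^k_1$ are fixed, the remainder is a short computation driven by identity (P6). Isolating the $k=1$ term and substituting $q^k_1=4$ for $k\ge 2$, I would write
\[f_1 = -\frac{n-3}{2}\cdot 2 \;+\; 4\sum_{k=2}^{\n-1} (-1)^k \frac{n-1-2k}{2} = -(n-3) + 2\sum_{k=2}^{\n-1}(-1)^k(n-1-2k).\]
To evaluate the surviving sum I would apply (P6), $\sum_{k=1}^{\n-1}(-1)^k(n-1-2k)=\frac{2-n}{2}$, and subtract its $k=1$ term $-(n-3)$, obtaining $\sum_{k=2}^{\n-1}(-1)^k(n-1-2k)=\frac{2-n}{2}+(n-3)=\frac{n-4}{2}$. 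Substituting back gives $f_1 = -(n-3)+(n-4) = -1$, which is the desired conclusion.
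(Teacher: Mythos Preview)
Your proof is correct and follows essentially the same approach as the paper: read off $q^k_1$ from Lemmas \ref{first_row}, \ref{n/2_row}, \ref{1<k}, isolate the exceptional $k=1$ term, and reduce the remaining sum via identity (P6). The only difference is cosmetic---you justify the case analysis for $q^k_1=4$ more carefully than the paper does, and you simplify the arithmetic in a slightly different order, but the argument is the same.
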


\begin{proof}
By Lemma \ref{first_row}, \ref{n/2_row} and $\ref{1<k}$, we have 
\[q_{1}^{1}=2,~~q_{1}^{k}=4~~\forall k=2,\dotsc,\n-1. \]
In view of this,
\begin{equation} \label{x}
    \begin{aligned}
        f_{1}&=\sum_{k=1}^{\n-1} (-1)^k \dfrac{(n-1-2k)}{2}q_{1}^{k}   \\
        &= (3-n) + 2\sum_{k=2}^{\frac{n}{2}-1} (-1)^k (n-1-2k).
\end{aligned}
\end{equation}        
By the identity in (P6),  $$2\sum_{k=1}^{\frac{n}{2}-1} (-1)^k (n-1-2k)=2-n.$$
So, 
\[2\sum_{k=2}^{\frac{n}{2}-1} (-1)^k (n-1-2k) =(2-n)-2(3-n).\]
Substituting the above in $(\ref{x})$, we get
\[f_1=(3-n)+(2-n)-2(3-n)=-1.\] The proof is complete.
\end{proof}

\begin{lemma} \label{f2}
$f_{2}=\frac{3-n}{2}$.
\end{lemma}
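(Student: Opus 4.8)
The plan is to compute the second coordinate $q_2^k$ of each row vector ${c^k}'\D$ by reading it off from Lemmas \ref{first_row}, \ref{n/2_row} and \ref{1<k}, and then to evaluate the weighted sum defining $f_2$ using the identity in (P6). First I would record the value $q_2^k$ for each $k$. From Lemma \ref{first_row}, the vector $q^1={c^1}'\D$ equals $(2,2,3,4,\dotsc,4,3,2)$, so $q_2^1=2$. From Lemma \ref{n/2_row}, the vector $q^{\n-1}$ begins with $\n-2\geq 2$ entries equal to $4$ (here I use $n\geq 8$), so $q_2^{\n-1}=4$. For the intermediate indices $1<k<\n-1$, Lemma \ref{1<k} gives the explicit description of $q_j^k$, so to find $q_2^k$ I must check which of the special indices $k+1$, $n-k$, $k$, $k+2$, $n-k-1$, $n-k+1$ can equal $2$.

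Carrying out this check, $2=k+1$ only when $k=1$ and $2=k$ only when $k=2$, while the equations $n-k=2$, $n-k-1=2$, $n-k+1=2$, $k+2=2$ force $k$ outside the range $1<k<\n-1$ (this is again where $n\geq 8$ enters). Hence $q_2^2=3$ and $q_2^k=4$ for every $3\le k\le \n-1$. Together with $q_2^1=2$ this pins down all the coefficients in the sum, which read $(2,3,4,\dotsc,4)$.

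Then I would set $a_k:=(-1)^k\tfrac{n-1-2k}{2}$ and split $f_2=\sum_{k=1}^{\n-1}a_k q_2^k$ as $2a_1+3a_2+4\sum_{k=3}^{\n-1}a_k$. Adding and subtracting the $k=1,2$ terms rewrites this as $4S-2a_1-a_2$, where $S:=\sum_{k=1}^{\n-1}a_k=\tfrac12\cdot\tfrac{2-n}{2}=\tfrac{2-n}{4}$ by (P6). Substituting $a_1=-\tfrac{n-3}{2}$ and $a_2=\tfrac{n-5}{2}$ gives $f_2=(2-n)+(n-3)-\tfrac{n-5}{2}=\tfrac{3-n}{2}$, as claimed.

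The main obstacle is the bookkeeping in the second step, namely correctly determining $q_2^k$ across all ranges of $k$ and in particular verifying that none of the boundary special indices from Lemma \ref{1<k} accidentally equals $2$ for $3\le k\le \n-1$. Once the coefficient pattern $(2,3,4,\dotsc,4)$ is established, the evaluation is a routine application of the identity in (P6).
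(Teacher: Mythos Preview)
Your proposal is correct and follows essentially the same route as the paper: determine $q_2^k$ from Lemmas~\ref{first_row}, \ref{n/2_row}, \ref{1<k} to get the pattern $(2,3,4,\dotsc,4)$, then evaluate the weighted sum via the identity in (P6). Your bookkeeping is in fact slightly more careful than the paper's, since you explicitly justify why none of the special indices $n-k$, $n-k\pm 1$, $k+2$ can equal $2$ for $k\ge 3$ under the standing assumption $n\ge 8$, whereas the paper simply asserts the value ``$4$ otherwise''; the final arithmetic $4S-2a_1-a_2$ is just a repackaging of the paper's computation.
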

\begin{proof}
We need to show that
\[\sum_{k=1}^{\n-1} (-1)^k \dfrac{(n-1-2k)}{2} q_{2}^{k} = \frac{3-n}{2}.\]
By Lemma \ref{first_row}, \ref{n/2_row} and \ref{1<k},
\[q_{2}^{k} = \begin{cases}
2 & \mbox{if}~k=1\\
3 & \mbox{if}~k=2\\
4 &\mbox{otherwise.}
\end{cases}\]
This gives,      
      \begin{equation*}
    \begin{aligned}
        f_2 &= \bigg((-1) \dfrac{(n-1-2)}{2} \times 2 \bigg)+ \bigg((-1)^2 \dfrac{(n-1-4)}{2} \times 3\bigg)
       +\bigg(\sum_{k=3}^{\n-1} (-1)^k \dfrac{(n-1-2k)}{2} \times 4\bigg)\\ 
        \\&= (3-n) + \frac{3}{2}(n-5) + 2\sum_{k=1}^{\frac{n}{2}-1} (-1)^k {(n-1-2k)}+4 .\\ 
        \end{aligned}
        \end{equation*}
Using identity (P6),
$$f_{2}=3-n+\frac{3}{2}(n-5) + 2-n +4. $$
Simplifying, we get $f_2=\frac{3-n}{2}$.
\end{proof}

\begin{lemma} \label{f3}
Let $2<j \leq \n-1$. Then, $f_j=2-n$.
\end{lemma}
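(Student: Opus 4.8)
The plan is to fix a column index $j$ with $2 < j \le \n - 1$ and to evaluate
\[f_j = \sum_{k=1}^{\n-1} (-1)^k \frac{n-1-2k}{2}\, q_j^k\]
by first determining, for this fixed $j$, the entire list of values $q_j^k$ as $k$ runs over $1,\dots,\n-1$. Reading off Lemmas \ref{first_row}, \ref{n/2_row} and \ref{1<k}, I claim that $q_j^k = 4$ for \emph{every} $k$ except the three consecutive indices $k = j-2,\ j-1,\ j$, at which the values are $3,\ 2,\ 3$ respectively. For $2 \le k \le \n-2$ Lemma \ref{1<k} says $q_j^k \ne 4$ exactly when $j \in \{k,k+1,k+2,\,n-k-1,n-k,n-k+1\}$; since $j \le \n-1$ while $k \le \n-1$ forces $n-k-1,\,n-k,\,n-k+1 \ge \n$, the three ``mirrored'' conditions can never be met, so only $j\in\{k,k+1,k+2\}$, i.e. $k\in\{j,j-1,j-2\}$, survives, giving the values $3,2,3$. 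The two boundary indices $k=1$ (Lemma \ref{first_row}) and $k=\n-1$ (Lemma \ref{n/2_row}) must be checked by hand, but they fit the same pattern: $q_1^j$ equals $3$ precisely when $j=3$ (the case $j-2=1$), and $q_{\n-1}^j$ equals $3$ precisely when $j=\n-1$ (the case $j=\n-1$), while off these they equal $4$.

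With this support identified, I would write $q_j^k = 4 + (q_j^k-4)$ and split the sum. Setting $g_k := (-1)^k\frac{n-1-2k}{2}$, the splitting gives
\[f_j = 4\sum_{k=1}^{\n-1} g_k \;-\;\bigl(g_{j-2}+2g_{j-1}+g_j\bigr),\]
since $q_j^k-4$ equals $-1,-2,-1$ at $k=j-2,j-1,j$ and $0$ elsewhere. The first term is handled directly by (P6): $4\sum_{k=1}^{\n-1} g_k = 2\sum_{k=1}^{\n-1}(-1)^k(n-1-2k) = 2\cdot\frac{2-n}{2} = 2-n$. It therefore remains only to show that the correction $g_{j-2}+2g_{j-1}+g_j$ vanishes.

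For the cancellation I would use $(-1)^{j-2}=(-1)^j$ and $(-1)^{j-1}=-(-1)^j$ to factor out $(-1)^j$, reducing the correction to
\[(-1)^j\cdot\frac{(n+3-2j)-2(n+1-2j)+(n-1-2j)}{2},\]
whose numerator collapses to $0$ upon collecting the $n$, constant, and $j$ terms. Hence $f_j = (2-n)-0 = 2-n$, as asserted. The genuinely delicate step is the first one, namely the careful bookkeeping of the support of the correction: one must confirm that for $j$ in the left half no mirrored index contributes, and separately verify that the two boundary columns $k=1$ and $k=\n-1$ agree with the generic three-index pattern. Once that is pinned down, the remainder is the identity (P6) plus the routine algebraic cancellation above.
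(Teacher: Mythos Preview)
Your argument is correct and is essentially the same as the paper's: both proofs identify that $q_j^k$ equals $4$ except at the three indices $k\in\{j-2,j-1,j\}$ (with values $3,2,3$), invoke (P6) to evaluate the baseline sum as $2-n$, and then check the algebraic identity $g_{j-2}+2g_{j-1}+g_j=0$ (which the paper writes as $2\alpha+\beta+\gamma=0$). Your ``add and subtract $4$'' bookkeeping is just a repackaging of the paper's split over $\Omega=\{j-2,j-1,j\}$. One small slip: in your boundary discussion you wrote $q_1^j$ and $q_{\n-1}^j$ where you meant $q_j^1$ and $q_j^{\n-1}$.
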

\begin{proof}
Since $2<j \leq \n-1$, by Lemma \ref{first_row}, $\ref{n/2_row}$ and $\ref{1<k}$, we have
\[q^1_j= \begin{cases}
3&~\mbox{if}~j=3\\
4&~\mbox{otherwise},
\end{cases}\]
\[q^{\frac{n}{2}-1}_j = \begin{cases}
3&~\mbox{if}~j=\frac{n}{2}-1\\
4&~\mbox{otherwise},
\end{cases}\]
and for $1<k<\frac{n}{2}-1$
\[q^k_j= \begin{cases}
2&~\mbox{if}~j=k+1\\
3&~\mbox{if}~j=k,k+2\\
4&~\mbox{otherwise}.
\end{cases}\]
From the above, we see that
\[q^k_j= \begin{cases}
2&~\mbox{if}~j=k+1~\mbox{and}~1<k<\frac{n}{2}-1\\
3&~\mbox{if}~j=3~\mbox{and}~ k=1 \\
3&~\mbox{if}~j= k= \frac{n}{2}-1 \\
3&~\mbox{if}~j=k,k+2~\mbox{and}~1<k<\frac{n}{2}-1  \\
4&~\mbox{otherwise}.
\end{cases}\]
This gives
\begin{equation}\label{qjk}
    q^k_j= \begin{cases}
2&~\mbox{if}~k=j-1\\
3&~\mbox{if}~k=j,j-2 \\
4&~\mbox{otherwise}.
\end{cases}
\end{equation}
We need to compute 
\[f_j= \sum_{k=1}^{\n-1} (-1)^k \dfrac{(n-1)-2k}{2} q_{j}^{k}\]
for $2<j \leq \n-1$. Define \[\alpha:=(-1)^{j-1} \dfrac{(n-1-2(j-1))}{2},\]
 $$\beta:=(-1)^{j} \dfrac{(n-1-2j)}{2},$$ \[\gamma:=(-1)^{j-2} \frac{(n-1-2(j-2))}{2}.\]
It is easy to see that 
\begin{equation} \label{pp}
2 \alpha + \beta + \gamma=0.
\end{equation}
Define $\Omega:=\{j-2,j,j-1\}$. By  (\ref{qjk}), we have
\[f_{j}=2 \alpha + 3 \beta + 3 \gamma +4 \sum_{k \notin \Omega}(-1)^{k}\frac{(n-1-2k)}{2}.\]
Since \[\sum_{k \notin \Omega} (-1)^{k} \frac{(n-1-2k)}{2}=\sum_{k=1}^{\n-1}(-1)^{k} \frac{(n-1-2k)}{2}-\alpha-\beta-\gamma, \]
by the identity in (P6), 
\[f_{j}=2 \alpha + 3 \beta + 3 \gamma + (2-n)-4(\alpha+\beta+\gamma). \]
After simplification, we get
$$f_{j}=-2 \alpha -\beta -\gamma+2-n. $$
From $(\ref{pp})$,  we now have 
$f_{j}=2-n$.
\end{proof}

\begin{lemma} \label{f4}
$f_{\n}=2-n.$
\end{lemma}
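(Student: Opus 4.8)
The plan is to follow the same template used for Lemmas \ref{f1}, \ref{f2} and \ref{f3}: read off the single entry $q^k_{\n}$ for every $k$ from the three computational lemmas, substitute into the definition of $f_{\n}$, and collapse the resulting alternating sum using the identity in (P6). Recall $q^k := {c^k}'\D$ and $f_{\n} = \sum_{k=1}^{\n-1}(-1)^k \tfrac{n-1-2k}{2}\, q^k_{\n}$.

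First I would tabulate $q^k_{\n}$. For $k=1$, Lemma \ref{first_row} places the coordinate $\n$ inside the block of $4$'s, since $n \geq 8$ gives $4 \leq \n \leq n-3$; hence $q^1_{\n}=4$. For the boundary index $k=\n-1$, Lemma \ref{n/2_row} gives directly $q^{\n-1}_{\n}=1$. For the interior indices $1<k<\n-1$, Lemma \ref{1<k} says $q^k_{\n}=2$ exactly when $\n \in \{k+1,n-k\}$ and $q^k_{\n}=3$ exactly when $\n \in \{k,k+2,n-k-1,n-k+1\}$; solving each equation for $k$ and testing it against $1<k<\n-1$ shows that the only admissible solution is $\n=k+2$, i.e. $k=\n-2$ (which is strictly interior because $n\ge 8$ forces $1<\n-2<\n-1$), giving $q^{\n-2}_{\n}=3$. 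Therefore
\[
q^k_{\n} = \begin{cases} 1 & k = \n-1 \\ 3 & k = \n-2 \\ 4 & \text{otherwise.} \end{cases}
\]

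With this table I would set $a_k := (-1)^k \tfrac{n-1-2k}{2}$, replace every entry $q^k_{\n}$ by $4$, and then correct the two exceptional indices:
\[
f_{\n} = 4\sum_{k=1}^{\n-1} a_k \;-\; a_{\n-2} \;-\; 3\,a_{\n-1}.
\]
By the identity in (P6), $4\sum_{k=1}^{\n-1} a_k = 2\sum_{k=1}^{\n-1}(-1)^k(n-1-2k) = 2-n$. For the correction, a direct evaluation gives $a_{\n-2} = (-1)^{\n}\tfrac{3}{2}$ and $a_{\n-1} = -(-1)^{\n}\tfrac{1}{2}$ (using $(-1)^{\n-2}=(-1)^{\n}$ and $(-1)^{\n-1}=-(-1)^{\n}$), so that $-a_{\n-2}-3a_{\n-1} = -(-1)^{\n}\tfrac{3}{2}+(-1)^{\n}\tfrac{3}{2}=0$. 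Hence $f_{\n}=(2-n)+0=2-n$.

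The only delicate step — and the one I expect to be the main obstacle — is the bookkeeping in the second paragraph: one must verify that among the four ``value-$3$'' conditions and the two ``value-$2$'' conditions of Lemma \ref{1<k}, only $\n=k+2$ admits a solution in the range $1<k<\n-1$, and that the genuine boundary index $k=\n-1$ is governed by Lemma \ref{n/2_row} rather than by Lemma \ref{1<k}. Once the table for $q^k_{\n}$ is correct, the parity cancellation of the two correction coefficients is automatic and delivers the clean value $2-n$.
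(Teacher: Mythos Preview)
Your proof is correct and follows essentially the same approach as the paper: you read off $q^{k}_{\n}$ from Lemmas \ref{first_row}, \ref{n/2_row} and \ref{1<k}, obtain the same table ($1$ for $k=\n-1$, $3$ for $k=\n-2$, $4$ otherwise), and then collapse the alternating sum via (P6). The only cosmetic difference is organizational---you write $f_{\n}=4\sum_{k=1}^{\n-1}a_k-a_{\n-2}-3a_{\n-1}$ and observe the two correction terms cancel, whereas the paper truncates the $4$-weighted sum at $k=\n-3$ and adds the two special terms separately---but the computation is the same.
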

\begin{proof}
In view of Lemma \ref{first_row}, \ref{n/2_row} and \ref{1<k}, we have
\begin{equation*}
    q_{\n}^{k}=
    \begin{cases}
    1 &\mbox{if~}k=\n-1\\
    3 & \mbox{if~}k=\n-2.\\
    4 & \mbox{otherwise.} \\
 \end{cases}
\end{equation*}
Now \[f_{\n}=\sum_{k=1}^{\n-1}(-1)^k \frac{(n-1-2k)}{2}q_{\n}^{k}. \] So, 
\[f_{\n}= 4\sum_{k=1}^{\n-3}(-1)^k \frac{(n-1-2k)}{2} + 3(-1)^{\n-2} \frac{(n-1-2(\n-2))}{2}+(-1)^{\n-1} \frac{(n-1-2(\n-1))}{2}.\]
It is easy to see that 
\[\frac{n-1-2(\n-2)}{2}=3/2~~~\mbox{and}~~~\frac{n-1-2(\n-1)}{2}=1/2.\]
Hence 
\[f_{\n}= 4\sum_{k=1}^{\n-3}(-1)^k \frac{(n-1-2k)}{2} + 4 (-1)^{\n} .\]
The identity in (P6) now gives
$$\sum_{k=1}^{\n-3}(-1)^k \frac{(n-1-2k)}{2}=\frac{2-n}{4}-(-1)^{\n}.$$
The last two equations imply that $f_{\n}=2-n$.
\end{proof}

\begin{lemma} \label{f-vector}
$f=(-1,\frac{3-n}{2},2-n,\dotsc,2-n,\frac{3-n}{2})$.
\end{lemma}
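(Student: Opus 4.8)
The plan is to assemble Lemma~\ref{f-vector} directly from the four component computations in Lemmas~\ref{f1}--\ref{f4}, glued together by a single symmetry observation. Recall that $f \in \rr^{n-1}$ is a linear combination of the rows $q^k = {c^k}'\D$, each of which was shown in Lemmas~\ref{first_row}, \ref{n/2_row} and~\ref{1<k} to follow symmetry in its last $n-2$ coordinates. The set of vectors in $\rr^{n-1}$ that follow symmetry in the sense of (P7) is a linear subspace, hence closed under linear combinations, so I would first conclude that $f$ itself follows symmetry in its last $n-2$ coordinates; that is, $f_i = f_{n+1-i}$ for all $i = 2,\dotsc,n-1$.

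Next I would collect the explicit values already established for the first half of the coordinates. Lemma~\ref{f1} gives $f_1 = -1$, Lemma~\ref{f2} gives $f_2 = \frac{3-n}{2}$, Lemma~\ref{f3} gives $f_j = 2-n$ for every $2 < j \leq \frac{n}{2}-1$, and Lemma~\ref{f4} gives $f_{\frac{n}{2}} = 2-n$. Together these pin down every coordinate $f_1, f_2, \dotsc, f_{\frac{n}{2}}$, with $f_j = 2-n$ for all $3 \leq j \leq \frac{n}{2}$.

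I would then propagate these values to the second half using $f_i = f_{n+1-i}$. Reflecting $i=2$ yields $f_{n-1} = f_2 = \frac{3-n}{2}$, while reflecting the indices $3 \leq i \leq \frac{n}{2}$, where $f_i = 2-n$, yields $f_{n+1-i} = 2-n$ for all $\frac{n}{2}+1 \leq n+1-i \leq n-2$. Taking the union of the directly computed indices $3,\dotsc,\frac{n}{2}$ with their reflections $\frac{n}{2}+1,\dotsc,n-2$ shows that every coordinate from index $3$ through index $n-2$ equals $2-n$; the two coordinates at indices $2$ and $n-1$ equal $\frac{3-n}{2}$; and $f_1 = -1$. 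Reading these off gives exactly $f = (-1,\frac{3-n}{2},2-n,\dotsc,2-n,\frac{3-n}{2})$, as claimed.

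Since all of the genuine arithmetic has already been discharged in the preceding lemmas, I do not anticipate any real difficulty here; the only point demanding care is the index bookkeeping. Specifically, I would verify that the central pair $\frac{n}{2}, \frac{n}{2}+1$ is correctly handled — $f_{\frac{n}{2}} = 2-n$ comes from Lemma~\ref{f4} and its reflection forces $f_{\frac{n}{2}+1} = 2-n$ — so that the block of $2-n$ entries spans the full range $3 \leq j \leq n-2$ with neither a gap nor an overlap at the center.
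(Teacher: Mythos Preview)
Your proposal is correct and follows essentially the same approach as the paper: use Lemmas~\ref{f1}--\ref{f4} to determine $f_1,\dotsc,f_{\frac{n}{2}}$, observe that $f$ inherits symmetry in its last $n-2$ coordinates from the $q^k$'s (Lemmas~\ref{first_row}, \ref{n/2_row}, \ref{1<k}), and propagate to the remaining coordinates. Your version is merely more explicit about the index bookkeeping at the center, but there is no substantive difference.
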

\begin{proof}
By Lemma \ref{f1}, $\ref{f2}$, $\ref{f3}$ and $\ref{f4}$, we have 
\[(f_1,\dotsc,f_{\n}) =(-1,\frac{3-n}{2},2-n,\dotsc,2-n).\]
By Lemma  \ref{first_row}, $\ref{n/2_row}$ and \ref{1<k}, 
$q^k$ follows symmetry in its last $n-2$ coordinates for each $k=1,2,\dotsc,\n-1$.
 So, $f$
follows symmetry in its last $n-2$ co-ordinates. Thus,
\[f=(-1,\frac{3-n}{2},2-n,\dotsc,2-n,\frac{3-n}{2}). \]
The proof is now complete.
\end{proof}
We now prove Theorem $\ref{inverse}$.
\begin{proof}
In view of Lemma \ref{M} and \ref{f-vector},
\[\L D=
\left[\begin{array}{cccc}
\frac{1-n}{2} & \frac{5-n}{2} \1' \\
\\
\frac{1}{2}\1 & M
\end{array}\right],\]
where
 \[M=\cir (\dfrac{n-1}{2}u'-\frac{1}{2} \1{'}+f).\]
 Since \(u=(0,1,2,\dotsc,2,1)'\) and 
 \(f=(-1,\frac{3-n}{2},2-n,\dotsc,2-n,\frac{3-n}{2}), \)
\[\dfrac{n-1}{2}u'-\frac{1}{2} \1'+f'=(-\frac{3}{2},\frac{1}{2},\dotsc,\frac{1}{2}).\] 
By an easy manipulation we have,
\[\cir(-\frac{3}{2},\frac{1}{2},\dotsc,\frac{1}{2})=-2 I+ \frac{1}{2} \1 \1'. \]
By setting $w=\frac{1}{4}(5-n,1,\dotsc,1)'$, we deduce that
\begin{equation} \label{ld2}
\L D +2 I=2 w\1'.
\end{equation}
Another direct verification gives 
\begin{equation} \label{ld3}
Dw=\frac{n-1}{4} \1.
\end{equation}
As $\det(D)=1-n$, $D$ is non-singular.
 Hence by $(\ref{ld2})$, we have 
 \[(2I-2w \1')D^{-1} = -\L.\]
So,
\begin{equation} \label{inv}
    2D^{-1} = -\L+2w \1'D^{-1}.
   \end{equation}
    By (\ref{ld3}),
\[\1'D^{-1} = \frac{4}{n-1} w'.\]
Now equation (\ref{inv}) gives
\[D^{-1} = -\frac{1}{2}\L+\frac{4}{n-1} ww'.\]
This completes the proof.

\end{proof}

\subsection{Properties of the special Laplacian matrix}
The usual Laplacian matrix (see equation (\ref{lap})) 
is positive semidefinite, has rank $n-1$, all row sums equal to $0$ and any cofactor is equal to the number
of spanning trees in the graph.
We show that the special Laplacian matrix $\L$ also has these properties and any cofactor of $\L$ is equal to $2^{n-3}$.
\begin{thm}
Column sums of $\L$ are zero and $\rank(\L)=n-1$.  
\end{thm}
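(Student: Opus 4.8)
The plan is to treat the two assertions separately: the vanishing of the column sums follows from symmetry together with a one-step block computation, while $\rank(\L)=n-1$ is best extracted from the identity $\L D+2I=2w\1'$ already proved in equation (\ref{ld2}), rather than analysed directly from the circulant-bordered structure of $\L$.

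First I would record that $\L$ is symmetric. In Definition \ref{D_laplacian} the summands $\tfrac{n-1}{2}I$ and the border block $\left[\begin{smallmatrix}0&\1'\\\1&0\end{smallmatrix}\right]$ are symmetric, and each $C_k=\cir({c^k}')$ is symmetric because the two nonzero entries of $c^k$ occupy the mutually conjugate circulant positions $k+1$ and $n-k$ of order $n-1$. Hence $\L=\L'$, so it is enough to prove $\L\1=\0$: then $\1'\L=(\L\1)'=\0'$ yields the column sums. To evaluate $\L\1$ I would proceed blockwise, using that the border block sends $\1$ to $(n-1,\1')'$ and that $C_k\1=2\1$ since every row of $C_k$ has exactly two ones. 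The leading coordinate is $\tfrac{n-1}{2}-\tfrac{n-1}{2}=0$, and each of the remaining $n-1$ coordinates equals $\tfrac{n-1}{2}-\tfrac12+\sum_{k=1}^{\n-1}(-1)^k(n-1-2k)$, which is $0$ by the identity in (P6). Thus $\L\1=\0$ and all column sums vanish; in particular $\rank(\L)\le n-1$.

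For the rank I would read everything off equation (\ref{ld2}): $\L D=2w\1'-2I=-2(I-w\1')$, with $w=\tfrac14(5-n,1,\dotsc,1)'$. Since $\1'w=\tfrac14\big((5-n)+(n-1)\big)=1$, the vector $w$ is an eigenvector of the rank-one matrix $w\1'$ with eigenvalue $1$, so $(I-w\1')w=\0$ and $\ker(I-w\1')=\mathrm{span}\{w\}$ is one-dimensional, giving $\rank(I-w\1')=n-1$. Hence $\rank(\L D)=n-1$, and because $\det(D)=1-n\neq0$ the matrix $D$ is invertible, whence $\rank(\L)=\rank(\L D)=n-1$.

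These computations are routine; the only real decision is the route. The main (mild) obstacle is to resist recomputing the null space of $\L$ from scratch via the eigenvalues of its circulant block, and instead to notice that multiplying by the invertible $D$ already collapses the rank question to the transparent matrix $-2(I-w\1')$, whose rank is settled by the single scalar identity $\1'w=1$.
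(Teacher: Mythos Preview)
Your argument is correct. The rank computation is essentially the paper's: both read $\rank(\L)=\rank(\L D)$ off equation~(\ref{ld2}) and the invertibility of $D$, the only cosmetic difference being that you identify $\ker(I-w\1')=\mathrm{span}\{w\}$ while the paper identifies the left null space of $\L D$ as $\mathrm{span}\{\1\}$---the same one--line computation using $\1'w=1$.

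Where you diverge is in proving the column sums vanish. The paper stays inside equation~(\ref{ld2}): multiplying on the left by $\1'$ and using $\1'w=1$ gives $\1'\L D=0$, and then invertibility of $D$ yields $\1'\L=0$. You instead go back to Definition~\ref{D_laplacian}, argue $\L$ is symmetric, and compute $\L\1$ blockwise with $C_k\1=2\1$ and (P6). Both are short and valid; the paper's route is slightly more economical since it needs no direct inspection of $\L$ once~(\ref{ld2}) is in hand, while your route has the minor advantage of establishing $\L\1=0$ independently of the main identity (and of the symmetry of $\L$, which the paper never states explicitly).
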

\begin{proof}
We recall equation $(\ref{ld2})$,
\begin{equation*} 
\L D +2 I=2 w\1'.
\end{equation*}
Since $\1'w=1$, the above equation gives 
$\1' \L D=0$.
Let $p \in \rr^n$ be a non-zero vector such that $p' \L D=0$.
In view of (\ref{ld2}),  we have
\[p'(-2I+2w\1') = 0.\]
This gives
\[p'=(p'w)\1'.\]
So, $p$ is a multiple of $\1$. Thus,
nullity of $\L D$ is one and hence $\rank(\L D)=n-1$.
As $D$ is non-singular, $\rank(\L)=n-1$.
Since $\1' \L D=0$ if and only if $\1' \L=0$, all the column sums of $\L$ are zero. The proof is complete.
\end{proof}

\begin{thm}
$\L$ is positive semidefinite.
\end{thm}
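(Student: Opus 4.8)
The plan is to determine the whole spectrum of $\L$ from its bordered--circulant shape and then check that no eigenvalue is negative. Write $m:=n-1$ (an odd integer) and record that
\[\L=\left[\begin{array}{cc} \frac{n-1}{2} & -\frac12\1' \\ -\frac12\1 & A\end{array}\right],\qquad A:=\frac{n-1}{2}I+\sum_{k=1}^{\n-1}(-1)^k\frac{(n-1)-2k}{2}C_k,\]
where $A$ is a symmetric circulant matrix of order $m$ satisfying $A\1=\frac12\1$ (this is exactly the computation of $B$ in the proof of Lemma \ref{M}, since $C_k\1=2\1$). The structural observation driving the proof is that $\rr^n$ splits as an orthogonal direct sum $\mathcal P\oplus\mathcal Q$ of two $\L$-invariant subspaces, with $\mathcal P:=\mathrm{span}\{e_1,(0,\1')'\}$ ($e_1$ the first standard basis vector) and $\mathcal Q:=\{(0,v')':v\in\rr^{m},\ \1'v=0\}$. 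Invariance of $\mathcal Q$ follows from $\1'A=\frac12\1'$, and invariance of $\mathcal P$ from $A\1=\frac12\1$ together with $\1'\1=m$; orthogonality is clear.

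First I would dispose of the two easy summands. On $\mathcal P$, in the basis $\{e_1,(0,\1')'\}$ the operator $\L$ is represented by $\begin{bmatrix}\frac{n-1}{2}&-\frac{n-1}{2}\\ -\frac12&\frac12\end{bmatrix}$, whose trace is $\frac n2$ and whose determinant is $0$; hence $\L$ contributes the eigenvalues $0$ and $\frac n2$ from $\mathcal P$ (the $0$ being the known null vector $\1$). On $\mathcal Q$, every Fourier vector of nonzero frequency $j$ is orthogonal to $\1$ and is an eigenvector of the circulant $A$, so it is an eigenvector of $\L$ with eigenvalue equal to the circulant eigenvalue
\[\mu_j=\frac{m}{2}+\sum_{k=1}^{\n-1}(-1)^k(m-2k)\cos\Big(\frac{2\pi jk}{m}\Big),\qquad j=1,\dots,m-1.\]
These account for the remaining $m-1=n-2$ eigenvalues, so the entire problem reduces to showing $\mu_j\ge 0$.

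For the crux I would absorb the alternating signs by setting $\phi:=\frac{2\pi j}{m}+\pi$, so that $(-1)^k\cos(2\pi jk/m)=\cos(k\phi)$ and
\[2\mu_j=\sum_{k=-(\n-1)}^{\n-1}(m-2|k|)\,e^{ik\phi}.\]
Since $m-2|k|=2(\n-|k|)-1$, the right-hand side is a combination of a Fej\'er kernel and a Dirichlet kernel, namely $2\big(\frac{\sin(\n\phi/2)}{\sin(\phi/2)}\big)^2-\frac{\sin(m\phi/2)}{\sin(\phi/2)}$; writing this over the common denominator $\sin^2(\phi/2)$ and reducing the numerator with product-to-sum identities collapses it to $1-\cos\big(\frac{m\phi}{2}\big)\cos\big(\frac\phi2\big)$. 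Because $m$ is odd and $\phi=\frac{2\pi j}{m}+\pi$, one has $\cos\big(\frac{m\phi}{2}\big)=\cos\big(\pi j+\frac{m\pi}{2}\big)=0$ and $\sin\big(\frac\phi2\big)=\cos\big(\frac{\pi j}{m}\big)\ne 0$, whence
\[\mu_j=\frac{1}{2\cos^2(\pi j/m)}>0\qquad(j=1,\dots,m-1).\]
Thus all eigenvalues of $\L$ are nonnegative and exactly one vanishes, so $\L$ is positive semidefinite (and the rank-$n-1$ statement is recovered). The main obstacle is precisely this last computation: recognizing that the coefficient pattern $m-2|k|$ turns the Fourier sum into a Fej\'er-minus-Dirichlet kernel and carrying the product-to-sum reduction down to $1-\cos(\frac{m\phi}{2})\cos(\frac\phi2)$; once oddness of $m$ kills the cosine, positivity is immediate. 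As a sanity check, for $W_6$ ($m=5$) the formula gives $\mu_j=\tfrac12\sec^2(\pi j/5)\in\{0.764,\,5.236\}$, matching the eigenvalues of the circulant block $\tfrac12\,\cir(5,-3,1,1,-3)$ displayed earlier.
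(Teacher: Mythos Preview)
Your proof is correct and takes a genuinely different route from the paper. The paper argues indirectly: from the key identity $\L D+2I=2w\1'$ it extracts $PDP=-2\L^{\dag}$ with $P=I-\frac{1}{n}J$, then invokes an external result (that the distance matrix of $W_n$ is conditionally negative definite) to conclude $x'\L^{\dag}x\ge 0$ on $\{\1\}^{\perp}$, and transfers this to $\L$ via the Moore--Penrose inverse. Your argument, by contrast, is completely self-contained and spectral: you exploit the bordered-circulant shape to split off the two-dimensional $\{e_1,(0,\1')'\}$-block (eigenvalues $0$ and $n/2$) and then diagonalize the circulant $A$ on the Fourier basis, evaluating the remaining eigenvalues in closed form as $\mu_j=\tfrac12\sec^2(\pi j/m)$. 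The Fej\'er-minus-Dirichlet reduction and the final collapse of the numerator to $1-\cos(\tfrac{m\phi}{2})\cos(\tfrac{\phi}{2})$ (using $\cos(\tfrac{m\phi}{2})=0$ because $m$ is odd) are both correct as stated. Your approach buys more: it gives the entire spectrum of $\L$ explicitly, recovers $\rank(\L)=n-1$ as a byproduct, and avoids any appeal to results outside the paper; the paper's approach is shorter and reuses machinery already in place for the inverse formula, but depends on the cited EDM/inertia fact about $D$.
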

\begin{proof}
Since $\rank(\L)=n-1$ and $\L \1=0$, $\L \L^{\dag}$ is a symmetric idempotent matrix with null space equal to $\mbox{span}\{\1\}$.
Thus, $\L \L^{\dag}=I-\frac{\1 \1'}{n}$. Define $J:=\1 \1'$ and $P:=I-\frac{J}{n}$. By the identity $$\L D=-2I+2w \1',$$ we get 
$PDP=-2\L^{\dag}$. Let $D=[d_{ij}]$ and $\L^{\dag}:=[a_{ij}]$. It is now easy to get the relation
\[d_{ij}=a_{ii}+a_{jj}-2a_{ij}. \]
From the above equation, 
\begin{equation} \label{ldj}
D=\diag(\L^{\dag})J+ J\diag( \L^{\dag}) -2 \L^{\dag}. 
\end{equation}
By Theorem 12 in \cite{Jakli}, $x' D x \leq 0$ for all $x \in \{\1\}^{\perp}$.
Now, $(\ref{ldj})$ implies that $x' \L^{\dag} x \geq 0$ for all $x \in \{\1\}^{\perp}$. We know that $\rank(\L)=n-1$ and $\L\1=0$. By the properties of Moore-Penrose inverse, we deduce that $x' \L x \geq 0$ for all $x \in \{\1\}^{\perp}$. Furthermore, since $\L \1=0$, it follows that $\L$ is positive semidefinite. The proof is complete.
\end{proof}

\begin{thm}
All cofactors of $\L$ are equal to $2^{n-3}$.
\end{thm}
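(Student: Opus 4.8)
The plan is to reduce the statement to the computation of a single scalar, and then pin that scalar down using the identity $\L D = -2I + 2w\1'$ from $(\ref{ld2})$.

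\textbf{Reduction.} Since $\L$ is symmetric with $\rank(\L)=n-1$ and $\L\1=\0$, its null space is exactly $\mbox{span}\{\1\}$, and the same holds for the left null space. Because $\det(\L)=0$, the adjugate satisfies $\L\,\adj(\L)=\adj(\L)\,\L=0$; hence every column of $\adj(\L)$ lies in $\mbox{span}\{\1\}$ and every row in $\mbox{span}\{\1'\}$. Therefore $\adj(\L)=c\,\1\1'=cJ$ for a single scalar $c$, where $J:=\1\1'$. As the $(i,j)$ entry of $\adj(\L)$ equals the $(j,i)$ cofactor of $\L$, this already shows that all cofactors coincide and equal $c$; it remains to prove $c=2^{n-3}$.

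\textbf{Computing $c$ via adjugates.} I would apply the reversal rule $\adj(XY)=\adj(Y)\adj(X)$ to $(\ref{ld2})$, writing $A:=-2I+2w\1'$, so that $\L D=A$ yields $\adj(D)\,\adj(\L)=\adj(A)$. On the left, $\det(D)=1-n$ gives $\adj(D)=(1-n)D^{-1}$, while $(\ref{ld3})$ gives $D^{-1}\1=\tfrac{4}{n-1}w$; combined with $\adj(\L)=cJ$ these produce
\[
\adj(D)\,\adj(\L)=(1-n)c\,D^{-1}\1\1'=-4c\,w\1'.
\]
For the right-hand side I would locate the null vectors of $A$. Using $\1'w=1$, one checks $Aw=\0$ and $\1'A=\0$, so $w$ spans the right null space and $\1$ the left; hence $\adj(A)=\kappa\,w\1'$ for a scalar $\kappa$. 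To fix $\kappa$, note that $2w\1'$ is rank one with nonzero eigenvalue $2\1'w=2$, so the eigenvalues of $A$ are $0$ (once) and $-2$ (with multiplicity $n-1$); in particular $\rank(A)=n-1$, which legitimizes the rank-one form of $\adj(A)$. The trace of the adjugate equals the $(n-1)$-th elementary symmetric function of the eigenvalues, whose only nonvanishing term omits the zero eigenvalue, giving ${\rm tr}(\adj(A))=(-2)^{n-1}$. Since also ${\rm tr}(\kappa w\1')=\kappa\,\1'w=\kappa$, we get $\kappa=(-2)^{n-1}$.

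\textbf{Conclusion.} Equating the two expressions for $\adj(A)$ gives $-4c\,w\1'=(-2)^{n-1}w\1'$, so $-4c=(-2)^{n-1}$. Because $n$ is even, $(-2)^{n-1}=-2^{n-1}$, whence $c=2^{n-3}$, as claimed.

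\textbf{Main obstacle.} The reduction to a single scalar is routine; the delicate step is the evaluation of $\adj(A)$. The eigenvalue/trace argument is the cleanest handle, since it simultaneously confirms $\rank(A)=n-1$ and delivers $\kappa$ without a messy cofactor expansion. The points demanding care are the order reversal in $\adj(XY)=\adj(Y)\adj(X)$ and the sign of $(-2)^{n-1}$: it is precisely the parity of $n$ entering through this sign that converts $-4c=(-2)^{n-1}$ into the stated value $2^{n-3}$.
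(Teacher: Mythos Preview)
Your proof is correct but proceeds along a different line from the paper's. The paper applies the matrix determinant lemma (item (P8)) to the already-established inverse formula $D^{-1}=-\tfrac12\L+\tfrac{4}{n-1}ww'$: since $\det(-\tfrac12\L)=0$ and $\adj(\L)=\delta J$ with $\1'w=1$, one gets $\det(D^{-1})=\tfrac{4}{n-1}(-1)^{n-1}2^{-(n-1)}\delta$, and matching this against $\det(D^{-1})=\tfrac{1}{1-n}$ yields $\delta=2^{n-3}$. You instead work one level earlier, at the relation $\L D=A:=-2I+2w\1'$ of $(\ref{ld2})$, and take adjugates using $\adj(\L D)=\adj(D)\adj(\L)$; both sides become scalar multiples of $w\1'$, and you read off the scalar on the right via the eigenvalue structure of $A$ and the identity ${\rm tr}(\adj(A))=e_{n-1}(\lambda_1,\dots,\lambda_n)$. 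The paper's route is a couple of lines shorter because the determinant lemma packages everything at once; your route has the mild advantage of not invoking the full inverse formula (only $(\ref{ld2})$ and $(\ref{ld3})$) and of giving, for free, the rank and spectrum of $A$. One point worth making explicit in your write-up is that the reversal rule $\adj(XY)=\adj(Y)\adj(X)$ is being used for singular $X=\L$ and $XY=A$; it holds for all square matrices by a polynomial-identity (density) argument, but a reader might pause there.
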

\begin{proof}
Since $\L$ is symmetric and $\L \1=0$, all cofactors of $\L$ are equal. Let the common cofactor of $\L$ be $\delta$.
By the inverse formula,
\[D^{-1}=-\frac{1}{2} \L + \frac{4}{n-1}ww'. \]
By using (P8) 
\begin{equation*}
\begin{aligned}
\det( D^{-1})&= \det (-\frac{1}{2} \L) + \frac{4}{n-1} w' \mbox{adj} (-\frac{1}{2} \L)w \\
&=\frac{4}{n-1} (-1)^{n-1} \frac{1}{2^{n-1}} \delta.
\end{aligned}
\end{equation*}
Since $n$ is even, we have
\begin{equation*}
\begin{aligned}
\det(D^{-1})=\frac{4}{1-n} \frac{1}{2^{n-1}} \delta. 
\end{aligned}
\end{equation*}
As $\det(D^{-1})=\frac{1}{1-n}$, we get $\delta=2^{n-3}$.
\end{proof}

We now obtain an interlacing property between the eigenvalues of $\L$ and $D$. 
\begin{thm}
Let the eigenvalues of $D$ and $\L$ be arranged $$\mu_1>0>\mu_2 \geq \cdots \geq \mu_n,$$ 
and
$$\lambda_1 \geq \cdots \geq \lambda_{n-1} > \lambda_n = 0,$$ respectively. Then
\[ 0>-\frac{2}{\lambda_1}\geq\mu_2\geq -\frac{2}{\lambda_2}\geq 
\cdots \geq -\frac{2}{\lambda_{n-1}}\geq \mu_n.\]
\end{thm}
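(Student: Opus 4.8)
The plan is to obtain the interlacing directly from the inverse formula of Theorem~\ref{inverse} by treating $D^{-1}$ as a rank-one positive semidefinite perturbation of $-\tfrac12\L$. Writing $D^{-1} = -\tfrac12\L + \tfrac{4}{n-1}ww'$, the term $\tfrac{4}{n-1}ww'$ is symmetric, positive semidefinite and has rank one. So I would invoke the classical interlacing theorem for rank-one perturbations: denoting the eigenvalues of $-\tfrac12\L$ by $q_1\ge\cdots\ge q_n$ and those of $D^{-1}$ by $p_1\ge\cdots\ge p_n$, one gets the chain $p_1\ge q_1\ge p_2\ge q_2\ge\cdots\ge p_n\ge q_n$, that is $p_i\ge q_i$ and $q_i\ge p_{i+1}$ for the relevant indices.

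Next I would identify the two lists explicitly. Since the stated arrangement of $D$ has inertia $(1,0,n-1)$ — which is consistent with, and can be read off from, the relation $PDP=-2\L^{\dag}$ obtained above together with $\det(D)=1-n\neq 0$ — the eigenvalues of $D^{-1}$ are the reciprocals $1/\mu_i$. Because $t\mapsto 1/t$ reverses order on $(-\infty,0)$, the negative block is reversed, giving $p_1=1/\mu_1$ and $p_k=1/\mu_{n+2-k}$ for $k\ge 2$. Similarly $-\tfrac12\L$ has eigenvalues $-\tfrac12\lambda_i$, and since $\lambda_n=0$ is the smallest we have $q_1=0$ and $q_k=-\tfrac12\lambda_{n+1-k}$.

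Substituting these identifications and relabelling the index as $m=n+1-k$, the inequalities $q_k\ge p_{k+1}$ and $p_k\ge q_k$ become $-\tfrac12\lambda_m\ge 1/\mu_m$ (for $m=2,\dots,n-1$) and $1/\mu_m\ge-\tfrac12\lambda_{m-1}$ (for $m=2,\dots,n$). In both, each side is strictly negative because $\lambda_m>0$ for $m\le n-1$ and $\mu_m<0$ for $m\ge2$; hence taking reciprocals reverses every inequality and turns $1/\mu_m$ into $\mu_m$ and $-\tfrac12\lambda_m$ into $-2/\lambda_m$. This produces $\mu_m\ge-2/\lambda_m$ for $m=2,\dots,n-1$ and $\mu_m\le-2/\lambda_{m-1}$ for $m=2,\dots,n$, which I would then chain together into $0>-\tfrac{2}{\lambda_1}\ge\mu_2\ge-\tfrac{2}{\lambda_2}\ge\cdots\ge-\tfrac{2}{\lambda_{n-1}}\ge\mu_n$, the leftmost strict inequality coming from $\lambda_1>0$.

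The step needing the most care — and the main obstacle — is the reciprocal bookkeeping: the negative eigenvalues of $D$ and of $D^{-1}$ sit in opposite orders, so the index matching in the interlacing chain must be tracked exactly, and the sign of each side must be checked before inverting, so that all the order reversals land in the claimed direction. The rank-one interlacing theorem itself is standard and can simply be quoted.
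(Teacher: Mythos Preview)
Your argument is correct and takes a genuinely different route from the paper. The paper works on the $D$ side: from $\L D + 2I = 2w\1'$ it derives $PDP = -2\L^{\dag}$ and hence the identity $D = \diag(\L^{\dag})J + J\diag(\L^{\dag}) - 2\L^{\dag}$; conjugating by an orthogonal $Q$ that diagonalises $\L^{\dag}$ (with first column a multiple of $\1$), the two $J$-terms collapse to a border, so $\diag(-2/\lambda_1,\dots,-2/\lambda_{n-1})$ appears as the lower-right $(n-1)\times(n-1)$ principal submatrix of $Q'DQ$, and Cauchy interlacing finishes. You instead stay on the $D^{-1}$ side and apply rank-one-perturbation interlacing directly to $D^{-1} = -\tfrac12\L + \tfrac{4}{n-1}ww'$, then transfer the resulting inequalities back to the $\mu_i$ by reciprocation on the negative axis. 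Your route is shorter and uses only the main inverse formula, avoiding the Moore--Penrose identity altogether; the paper's route avoids the index-reversal and reciprocal bookkeeping by producing the numbers $-2/\lambda_i$ as eigenvalues of a principal submatrix of (a conjugate of) $D$ itself. Both proofs ultimately rest on a standard interlacing theorem, just two different incarnations of it.
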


\begin{proof}
The eigenvalues of $\L^{\dag}$ are 
\[0<\dfrac{1}{\lambda_1} \leq \cdots \leq \dfrac{1}{\lambda_{n-1}} .\] 
Let $Q$ be an orthogonal matrix such that
$$Q'\L^{\dag}Q=\diag\bigg(0,\dfrac{1}{\lambda_1},\dotsc,\dfrac{1}{\lambda_{n-1}}\bigg).$$
By an easy computation, we see that 
$Q' \diag(\L^{\dag})J Q$ has first column non-zero and remaining columns equal to zero. 
Since 
\[D=\diag(\L^{\dag})J+ J\diag( \L^{\dag}) -2 \L^{\dag}, \]
it follows that 
$\diag\big(-\frac{2}{\lambda_1},\dotsc,-\frac{2}{\lambda_{n-1}}\big)$ is a principal submatrix of $Q'DQ$.
By interlacing theorem, we deduce
\[\mu_1 \geq 0>-\dfrac{2}{\lambda_1} \geq \mu_2 \geq \cdots \geq -\dfrac{2}{\lambda_{n-1}} \geq \mu_n.\]
The proof is complete.
\end{proof}

\bibliography{mybibfile}
\end{document}